\documentclass{my-aims}


\usepackage{amsmath}
\usepackage{paralist}
\usepackage{graphics} 
\usepackage{epsfig} 
\usepackage{graphicx}  
\usepackage{epstopdf}
\usepackage[colorlinks=true]{hyperref}
\usepackage{subfigure}


\def\a{{\alpha(\cdot,\cdot)}}
\def\b{{\beta(\cdot,\cdot)}}
\def\t{\tau}
\def\e{\epsilon}
\def\LI{{_aI_t^{\a}}}
\def\RI{{_tI_b^{\a}}}
\def\LDa{{_aD_t^{\a}}}
\def\LDb{{_aD_t^{\b}}}
\def\RDa{{_tD_b^{\a}}}
\def\RDb{{_tD_b^{\b}}}
\def\LC{{^C_aD_t^{\a}}}
\def\RCa{{^C_tD_b^{\a}}}
\def\RCb{{^C_tD_b^{\b}}}
\def\DC{{^CD_\gamma^{\a,\b}}}


\hypersetup{urlcolor=blue, citecolor=red}
\usepackage{hyperref}


\textheight=8.2 true in
\textwidth=5.0 true in
\topmargin 30pt
\setcounter{page}{1}



\newtheorem{theorem}{Theorem}[section]

\newtheorem{example}{Example}
\theoremstyle{definition}
\newtheorem{definition}[theorem]{Definition}
\newtheorem{remark}{Remark}


\title[Fractional Herglotz variational problems]{Fractional Herglotz variational problems\\ 
of variable order}

\author[D. Tavares, R. Almeida and D. F. M. Torres]{}


\subjclass{Primary: 26A33, 34A08; Secondary: 49K05, 49K10.}

\keywords{Fractional calculus, variable order, Herglotz variational problems.}

\email{dtavares@ipleiria.pt}
\email{ricardo.almeida@ua.pt}
\email{delfim@ua.pt}


\thanks{This work is part of first author's Ph.D., which is carried out 
at the University of Aveiro under the Doctoral Programme
\emph{Mathematics and Applications} of Universities of Aveiro and Minho.
It was supported by Portuguese funds through CIDMA
and \emph{The Portuguese Foundation for Science and Technology} (FCT),
within project UID/MAT/04106/2013. Tavares was also supported
by FCT through the Ph.D. fellowship SFRH/BD/42557/2007.}

\thanks{$^*$Corresponding author: Dina Tavares (dtavares@ipleiria.pt)}


\begin{document}

\maketitle

\centerline{\scshape Dina Tavares$^*$}
\medskip
{\footnotesize
\centerline{ESECS, Polytechnic Institute of Leiria, 2411--901 Leiria, Portugal}
\centerline{and} 
\centerline{Center for Research and Development in Mathematics and Applications (CIDMA)}
\centerline{Department of Mathematics, University of Aveiro, 3810--193 Aveiro, Portugal}}

\medskip

\centerline{\scshape Ricardo Almeida and Delfim F. M. Torres}
\medskip
{\footnotesize
\centerline{Center for Research and Development in Mathematics and Applications (CIDMA)}
\centerline{Department of Mathematics, University of Aveiro, 3810--193 Aveiro, Portugal}}


\bigskip


\begin{abstract}
We study fractional variational problems 
of Herglotz type of variable order. 
Necessary optimality conditions, described
by fractional differential equations depending 
on a combined Caputo fractional derivative 
of variable order, are proved. Two different cases 
are considered: the fundamental problem, 
with one independent variable, and the general case, 
with several independent variables.
We end with some illustrative examples
of the results of the paper.
\end{abstract}


\section{Introduction}

The theory of fractional calculus is an extension of ordinary calculus 
that considers integrals and derivatives of arbitrary real 
or complex order. Although its birth goes back to Euler, 
fractional calculus has gained a great importance 
only in recent decades, with the applicability 
of such operators for the efficient dynamic modeling 
of some real phenomena \cite{Coimbra,Sun}. More recently, 
a general theory of fractional calculus was presented, 
where the order of the fractional operators is not constant 
in time \cite{Samko_2}. This is a natural extension, 
since fractional derivatives are nonlocal operators 
and contain memory. Therefore, it is reasonable that 
the order of the derivative may vary along time.

The variational problem of Herglotz is a generalization 
of the classical variational problem \cite{MR3462534,MyID:342}. 
It allows us to describe nonconservative processes, 
even in the case when the Lagrangian is autonomous 
(that is, when the Lagrangian does not depend explicitly on time). 
In contrast to the calculus of variations, where the cost functional 
is given by an integral depending only on time, space and velocity, 
in the Herglotz variational problem the model is given by a differential 
equation involving the derivative of the objective functional $z$
and the Lagrange function depends on time, trajectories $x$ and $z$ 
and on the derivative of $x$. The problem of Herglotz was posed 
by Herglotz himself in 1930 \cite{Herglotz}, but only in 1996, 
with the works \cite{Guenther,Guenther:book}, it has gained 
a wide attention from the mathematical community. Indeed, 
since 1996, several papers were devoted to this subject: see
\cite{Almeida,Georgieva,Georgieva:sev,Santos:Viet,Santos:Disc,Santos:Spri,MR3462534,MyID:342}
and references therein.


\section{Preliminaries}

In this section we present some needed concepts and results.

\subsection{The fractional calculus of variable order}
\label{sec:FC}

We deal with fractional operators of variable fractional order 
on two variables, with range on the open interval $(0,1)$, 
that is, the order is a function $\alpha:[a,b]^2\to(0,1)$.
Given a function $x:[a,b]\to\mathbb{R}$, we present two 
different concepts of fractional derivatives of $x$.
First, we recall the definition of fractional 
integral \cite{Tatiana:IDOTA2011}.

\begin{definition}
The left Riemann--Liouville fractional integral 
of order $\a$ of $x$ is defined by
$$
\LI x(t)=\int_a^t \frac{1}{\Gamma(\alpha(t,\t))}(t-\t)^{\alpha(t,\t)-1}x(\t)d\t
$$
and the right Riemann--Liouville fractional integral of $x$ by
$$
\RI x(t)=\int_t^b\frac{1}{\Gamma(\alpha(\t,t))}(\t-t)^{\alpha(\t,t)-1} x(\t)d\t.
$$
\end{definition}

For fractional derivatives, we consider two types: 
Riemann--Liouville and Caputo fractional derivatives.

\begin{definition} 
The left Riemann--Liouville fractional derivative 
of order $\a$ of $x$ is defined by
$$
\LDa x(t)=\frac{d}{dt}\int_a^t
\frac{1}{\Gamma(1-\alpha(t,\t))}(t-\t)^{-\alpha(t,\t)}x(\t)d\t
$$
and the right Riemann--Liouville fractional derivative of $x$ by
$$
\RDa x(t)=\frac{d}{dt}\int_t^b
\frac{-1}{\Gamma(1-\alpha(\t,t))}(\t-t)^{-\alpha(\t,t)} x(\t)d\t.
$$
\end{definition}

\begin{definition} 
The left Caputo fractional derivative of order $\a$ of $x$ is defined by
$$
\LC x(t)=\int_a^t
\frac{1}{\Gamma(1-\alpha(t,\t))}(t-\t)^{-\alpha(t,\t)}x^{(1)}(\t)d\t
$$
and the right Caputo fractional derivative of $x$ by
$$
\RCa x(t)=\int_t^b
\frac{-1}{\Gamma(1-\alpha(\t,t))}(\t-t)^{-\alpha(\t,t)}x^{(1)}(\t)d\t.
$$
\end{definition}

Motivated by the works \cite{Malin:Tor,MyID:207}, we consider here
a generalization of previous concepts by introducing a linear 
combination of the fractional derivatives of variable fractional order.

\begin{definition}
\label{def1}
Let $\alpha, \beta: [a,b]^2\rightarrow(0,1)$ 
be two functions and $\gamma=(\gamma_1,\gamma_2) \in [0,1]^{2}$ a vector.
The combined Riemann--Liouville fractional derivative 
of function $x$ is defined by
\begin{equation}
\label{eq:cfd:RL}
D_\gamma^{\a,\b}x(t)=\gamma_1 \, \LDa x(t)+\gamma_2 \, \RDb x(t).
\end{equation}
Similarly, the combined Caputo fractional derivative 
of function $x$ is defined by
\begin{equation}
\label{eq:cfd:C}
^{C}D_\gamma^{\a,\b}x(t)=\gamma_1 \, \LC x(t)+\gamma_2 \, \RCb x(t).
\end{equation}
\end{definition}

When dealing with variational problems and necessary optimality
conditions, an important ingredient is always an integration 
by parts formula. Here, we present two such formulas, 
involving the Caputo fractional derivative of variable order.

\begin{theorem}[See \protect{\cite[Theorem 3.2]{Od}}]
\label{thm:FIP}
If $x,y \in C^1[a,b]$, then
$$
\int_{a}^{b}y(t) \, \LC x(t)dt
=\int_a^b x(t) \, {\RDa}y(t)dt+\left[x(t) 
\, {_tI_b^{1-\a}}y(t) \right]_{t=a}^{t=b}
$$
and
$$
\int_{a}^{b}y(t) \, {\RCa}x(t)dt=\int_a^b x(t) 
\, {\LDa} y(t)dt-\left[x(t) \, {_aI_t^{1-\a}}y(t)\right]_{t=a}^{t=b}.
$$
\end{theorem}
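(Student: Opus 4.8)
The plan is to reduce both identities to ordinary integration by parts, using Fubini's theorem to unfold the double integral hidden inside the Caputo derivative; I treat the first identity in detail, the second being symmetric. First I would substitute the definition of $\LC x$ into the left-hand side, producing the double integral
\[
\int_a^b \int_a^t y(t)\,\frac{1}{\Gamma(1-\alpha(t,\t))}(t-\t)^{-\alpha(t,\t)}\,x^{(1)}(\t)\,d\t\,dt
\]
over the triangle $\{(t,\t):a\le \t\le t\le b\}$. Since $\alpha$ is continuous on this compact set with values in $(0,1)$, its maximum there is strictly below $1$, so the kernel $(t-\t)^{-\alpha(t,\t)}$ is only weakly singular and hence integrable; combined with the boundedness of the continuous functions $y$, $x^{(1)}$ and of $1/\Gamma(1-\alpha)$, this legitimizes Fubini's theorem. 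Exchanging the order of integration and integrating first over $t\in[\t,b]$, I would recognize the inner integral, for each fixed $\t$, as exactly ${_\t I_b^{1-\a}}\,y(\t)$, so that the expression collapses to $\int_a^b x^{(1)}(\t)\,{_\t I_b^{1-\a}}y(\t)\,d\t$.

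Then I would apply classical integration by parts with $x^{(1)}(\t)\,d\t=d(x(\t))$, which yields the boundary term $\left[x(t)\,{_tI_b^{1-\a}}y(t)\right]_{t=a}^{t=b}$ together with $-\int_a^b x(\t)\,\frac{d}{d\t}{_\t I_b^{1-\a}}y(\t)\,d\t$. Reading off the definition of the right Riemann--Liouville derivative gives $\RDa y=-\frac{d}{dt}\,{_tI_b^{1-\a}}y$, so the remaining integral is precisely $\int_a^b x(t)\,\RDa y(t)\,dt$, which completes the first formula. For the second identity I would start from $\RCa x$, apply Fubini on the triangle $\{(t,\t):a\le t\le \t\le b\}$, identify the inner integral as $-\,{_aI_\t^{1-\a}}y(\t)$, integrate by parts, and invoke $\LDa y=\frac{d}{dt}\,{_aI_t^{1-\a}}y$; the minus sign carried by the Caputo kernel is exactly what forces the boundary term to appear with a negative sign.

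The main obstacle is not the integration by parts itself but the bookkeeping imposed by the \emph{variable} order. After swapping the order of integration one must verify that the kernel inherited from the Caputo derivative, whose exponent $-\alpha(t,\t)$ evaluates $\alpha$ at the (former) evaluation point $t$ and the integration variable $\t$, coincides \emph{exactly} with the kernel of the right Riemann--Liouville integral ${_\t I_b^{1-\a}}$, whose exponent evaluates $\alpha$ at (integration variable, base point). These match only because in both the first slot of $\alpha$ is the larger of the two variables; in the constant-order setting this is automatic, but in the present two-variable setting it is the crucial point that makes the operators line up, and it must be checked in tandem with the integrability estimate that justifies Fubini.
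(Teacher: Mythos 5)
The paper offers no proof of this result---it is imported verbatim from \cite[Theorem 3.2]{Od}---so there is nothing internal to compare against; your Fubini-plus-integration-by-parts argument is correct and is the standard proof of this formula, including the essential check that the two-variable order $\alpha$ is evaluated at (larger argument, smaller argument) in both the Caputo kernel and the Riemann--Liouville integral kernel, so that the two operators line up after the order of integration is swapped. The only step left tacit is that the classical integration by parts requires $t\mapsto{_tI_b^{1-\a}}y(t)$ to be absolutely continuous, i.e.\ that $\RDa y$ exists for $y\in C^1$; this is implicit in the statement of the theorem, but it does require some regularity of $\alpha$ itself beyond what justifies Fubini (continuity of $\alpha$ gives the integrability bound, whereas differentiating the variable-order kernel with respect to $t$ needs a little more).
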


To end this short introduction to
the fractional calculus of variable order, we introduce 
one more notation. The dual fractional derivative 
of \eqref{eq:cfd:RL} is defined by
$$
D_{\overline{\gamma}}^{\b,\a}=\gamma_2 \, {_aD_t^{\b}}
+\gamma_1 \, {_tD_T^{\a}},
$$
where $\overline{\gamma}=(\gamma_2,\gamma_1)$
and $T\in[a,b]$ is the final time of the problem 
under consideration (see \eqref{funct1} below). 
The dual fractional derivative of 
\eqref{eq:cfd:C} is defined similarly.


\subsection{The fractional calculus of variations}

Let $D$ denote the linear subspace of $C^1([a,b])\times [a,b]$ defined by
\begin{equation*}
D=\left\{  (x,t)\in C^1([a,b])\times [a,b] : \DC x(t) 
\mbox{ exists and is continuous on }[a,b] \right\}.
\end{equation*}
We endow $D$ with the following norm:
$$
\|(x,t)\|=\max_{a\leq t \leq  b}|x(t)|
+\max_{a\leq t \leq b}\left| \DC x(t)\right|+|t|.
$$
To fix notation, throughout the text we denote by 
$\partial_i \psi$ the partial derivative of a function 
$\psi:\mathbb{R}^{n} \rightarrow\mathbb{R}$ with respect 
to its $i$th argument, $i = 1, \ldots, n$. For simplicity of notation, 
we also introduce the operator $[\cdot]_\gamma^{\alpha, \beta}$ defined by
$$
[x]_\gamma^{\alpha, \beta}(t)=\left(t, x(t), \DC x(t)\right).
$$
Let $L$ be a Lagrangian 
$L:C^{1}\left([a,b]\times \mathbb{R}^2 \right)\to\mathbb{R}$. 
Consider the following problem of the calculus of variations:
minimize functional $\mathcal{J}:D\rightarrow \mathbb{R}$ with
\begin{equation}
\label{funct1}
\mathcal{J}(x,T)=\int_a^T 
L[x]_\gamma^{\alpha, \beta}(t) dt + \phi(T,x(T))
\end{equation}
over all $(x,T)\in D$ satisfying the initial condition $x(a)=x_a$,
for a given $x_a\in \mathbb{R}$. The terminal time $T$ 
and the terminal state $x(T)$ are considered here free. 
The \emph{terminal cost function}  
$\phi:[a,b]\times \mathbb{R}\to\mathbb{R}$ is at least of class $C^1$.

\begin{theorem}[See \cite{Tavares}]
\label{teo1}
If $(x,T)$ is a minimizer of functional \eqref{funct1} on $D$, 
then $(x,T)$ satisfies the fractional differential equations
\begin{equation}
\label{ELeq_1}
\partial_2 L[x]_\gamma^{\alpha, \beta}(t)
+D{_{\overline{\gamma}}^{\b,\a}}\partial_3 L[x]_\gamma^{\alpha, \beta}(t)=0
\end{equation}
on $[a,T]$ and
\begin{equation}
\label{ELeq_2}
\gamma_2\left({\LDb}\partial_3 L[x]_\gamma^{\alpha, \beta}(t)
-{ _TD{_t^{\b}}\partial_3 L[x]_\gamma^{\alpha, \beta}(t)}\right)=0
\end{equation}
on $[T,b]$. Moreover, the following transversality conditions hold:
\begin{equation}
\label{CT1}
\begin{cases}
L[x]_\gamma^{\alpha, \beta}(T)+\partial_1\phi(T,x(T))+\partial_2\phi(T,x(T))x'(T)=0,\\
\left[\gamma_1 \, {_tI_T^{1-\a}} \partial_3L[x]_\gamma^{\alpha, \beta}(t)
-\gamma_2 \, {_TI_t^{1-\b}} \partial_3 L[x]_\gamma^{\alpha, \beta}(t)\right]_{t=T}
+\partial_2 \phi(T,x(T))=0,\\
\gamma_2 \left[ {_TI_t^{1-\b}}\partial_3 L[x]_\gamma^{\alpha, \beta}(t)
-{_aI_t^{1-\b}\partial_3L[x]_\gamma^{\alpha, \beta}(t)}\right]_{t=b}=0.
\end{cases}
\end{equation}
\end{theorem}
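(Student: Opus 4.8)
The plan is to apply the direct method of the calculus of variations, adapted to the free terminal time $T$ and the free terminal state $x(T)$. Assume $(x,T)$ is a minimizer and embed it into a one-parameter family of admissible pairs $(x+\e h,\,T+\e\Delta T)$, where $h\in C^1([a,b])$ satisfies $h(a)=0$ (to preserve the initial condition) and $\Delta T\in\mathbb{R}$; the quantities $h(T)$, $h(b)$ and $\Delta T$ are kept free and mutually independent. Setting $j(\e)=\mathcal{J}(x+\e h,\,T+\e\Delta T)$ and using that $\DC$ is linear, so $\DC(x+\e h)=\DC x+\e\,\DC h$, the necessary condition $j'(0)=0$ follows by differentiating \eqref{funct1}: the Leibniz rule on the upper limit produces the boundary term $L[x]_\gamma^{\alpha,\beta}(T)\,\Delta T$, the integrand contributes $\partial_2 L\,h+\partial_3 L\,\DC h$, and the chain rule on $\phi$ contributes $\partial_1\phi\,\Delta T+\partial_2\phi\,(x'(T)\Delta T+h(T))$, with all $\phi$-derivatives evaluated at $(T,x(T))$.

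The core of the argument is to move the variable-order derivative off $h$ in
$$
\int_a^T\partial_3 L\,\DC h\,dt=\gamma_1\int_a^T\partial_3 L\,\LC h\,dt+\gamma_2\int_a^T\partial_3 L\,\RCb h\,dt .
$$
To the left piece I apply the integration by parts formula of Theorem \ref{thm:FIP} on $[a,T]$ (that is, with $b$ replaced by $T$), which produces the interior term $\gamma_1\,{_tD_T^{\a}}\partial_3 L\cdot h$ together with a boundary contribution. The right piece is the main obstacle: since $\RCb h(t)$ depends on the values of $h$ over the whole interval $[t,b]$, whereas the cost is integrated only up to $T$, Theorem \ref{thm:FIP} does not apply verbatim. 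My plan is to write out the double integral defining $\gamma_2\int_a^T\partial_3 L\,\RCb h\,dt$ and interchange the order of integration, splitting the running range into $[a,T]$ and $[T,b]$. On $[a,T]$ the inner integral is recognized as $-\,{_aI_t^{1-\b}}\partial_3 L$, while on $[T,b]$ it equals $-\,{_aI_t^{1-\b}}\partial_3 L+{_TI_t^{1-\b}}\partial_3 L$; a classical integration by parts, together with the identity $\frac{d}{dt}\,{_aI_t^{1-\b}}=\LDb$ and its based-at-$T$ analogue, transfers the derivative onto $\partial_3 L$ and generates one interior term on $[a,T]$ and one on $[T,b]$.

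Collecting everything, $j'(0)=0$ takes the form of an integral over $[a,T]$ against $h$, an integral over $[T,b]$ against $h$, boundary terms at $t=a,T,b$, and the terms proportional to $\Delta T$. First choosing $h$ supported in $(a,T)$ with $\Delta T=0$ and invoking the fundamental lemma of the calculus of variations forces the $[a,T]$-integrand to vanish; its operator reassembles exactly into the dual derivative $D_{\overline{\gamma}}^{\b,\a}=\gamma_2\,\LDb+\gamma_1\,{_tD_T^{\a}}$, giving \eqref{ELeq_1}. Next choosing $h$ supported in $(T,b)$ yields the $[T,b]$ equation $\gamma_2\bigl(\LDb\partial_3 L-{_TD_t^{\b}}\partial_3 L\bigr)=0$, which is \eqref{ELeq_2}. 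With both Euler--Lagrange equations in force, the surviving identity is a linear combination of the independent quantities $\Delta T$, $h(T)$ and $h(b)$; equating each coefficient to zero produces the three transversality conditions in \eqref{CT1}, the coefficient of $\Delta T$ recombining $L$, $\partial_1\phi$ and $\partial_2\phi\,x'(T)$.

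I expect the delicate points to be essentially of a bookkeeping nature: correctly tracking the based-at-$a$ versus based-at-$T$ fractional integrals after the interchange of integration (which is precisely what separates the interior equation on $[a,T]$ from the extra equation on $[T,b]$), and verifying that the several boundary contributions at $t=T$ combine with $\partial_2\phi$ into the single natural condition recorded in \eqref{CT1}. The admissibility of the interchange of integration and of the limit operations is guaranteed by the regularity built into the space $D$, namely $x\in C^1([a,b])$ with $\DC x$ continuous on $[a,b]$, which makes $\partial_3 L[x]_\gamma^{\alpha,\beta}$ a continuous function on all of $[a,b]$.
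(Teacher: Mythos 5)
Your proposal is correct in substance and follows the same variational skeleton as the source: a one-parameter family $(x+\e h,\,T+\e\Delta T)$ with $h(a)=0$, the Leibniz rule at the free upper limit, integration by parts to move $\DC$ off $h$, the fundamental lemma applied first to $h$ supported in $(a,T)$ and then in $(T,b)$, and finally the identification of the coefficients of the independent quantities $\Delta T$, $h(T)$, $h(b)$ with the three conditions in \eqref{CT1}. Note that this paper does not reprove Theorem~\ref{teo1} (it is imported from \cite{Tavares}), but the identical machinery is displayed in the proof of Theorem~\ref{mainteo}, so the comparison is against that template.

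The one place where you genuinely deviate is the treatment of $\gamma_2\int_a^T\partial_3L\;\RCb h\,dt$. You correctly diagnose that Theorem~\ref{thm:FIP} does not apply verbatim on $[a,T]$ because $\RCb h(t)$ sees $h$ on all of $[t,b]$, and you resolve this by writing out the double integral and applying Fubini, splitting the outer variable's range at $T$; your identification of the inner integrals as $-\,{_aI_t^{1-\b}}\partial_3L$ on $[a,T]$ and $-\,{_aI_t^{1-\b}}\partial_3L+{_TI_t^{1-\b}}\partial_3L$ on $[T,b]$ is right, and a classical integration by parts then yields the interior operators $\LDb$ and $\LDb-{_TD_t^{\b}}$ together with the boundary terms at $a$, $T$, $b$. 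The paper instead uses the algebraically slicker decomposition
\begin{equation*}
\gamma_2\int_a^T(\cdot)\,\RCb h\,dt=\gamma_2\left[\int_a^b(\cdot)\,\RCb h\,dt-\int_T^b(\cdot)\,\RCb h\,dt\right]
\end{equation*}
and invokes Theorem~\ref{thm:FIP} twice, on $[a,b]$ and on $[T,b]$. The two routes are equivalent (Fubini is what underlies Theorem~\ref{thm:FIP} in the first place); yours is more self-contained but re-derives the lemma by hand, while the paper's reuses the stated formula at the cost of introducing the interval $[T,b]$ explicitly --- which is also what makes the extra Euler--Lagrange equation \eqref{ELeq_2} and the third transversality condition at $t=b$ appear naturally. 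Your reassembly of the $[a,T]$ operator into $D_{\overline{\gamma}}^{\b,\a}=\gamma_2\,\LDb+\gamma_1\,{_tD_T^{\a}}$ (right endpoint $T$, not $b$) and your accounting of the $\phi$-terms via $\frac{d}{d\e}\big|_{0}\,(x+\e h)(T+\e\Delta T)=x'(T)\Delta T+h(T)$ are both exactly what is needed.
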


We can rewrite the transversality conditions 
\eqref{CT1}, obtaining the next result.

\begin{theorem}[See \cite{Tavares}]
\label{teo2}
If $(x,T)$ is a minimizer of functional \eqref{funct1} on $D$, 
then the fractional Euler--Lagrange equations \eqref{ELeq_1}
and \eqref{ELeq_2} are satisfied together with
the following transversality conditions:
\begin{equation*}
\begin{cases}
L[x]_\gamma^{\alpha, \beta}(T)+\partial_1\phi(T,x(T))\\
\qquad + x'(T) \left[ \gamma_2 {_TI}_t^{1-\b} \partial_3L[x]_\gamma^{\alpha, \beta}(t)
- \gamma_1 {_tI_T^{1-\a} \partial_3L[x]_\gamma^{\alpha, \beta}(t)} \right]_{t=T} =0,\\
\left[ \gamma_1\, {_tI_T^{1-\a}} \partial_3L[x]_\gamma^{\alpha, \beta}(t)
- \gamma_2\, {_TI_t^{1-\b}} \partial_3L[x]_\gamma^{\alpha, \beta}(t)\right]_{t=T}
+\partial_2 \phi(T,x(T))=0,\\
\gamma_2 \left[ _TI_t^{1-\b}\partial_3 L[x]_\gamma^{\alpha, \beta}(t)
-{_aI_t^{1-\b}\partial_3L[x]_\gamma^{\alpha, \beta}(t)}\right]_{t=b}=0.
\end{cases}
\end{equation*}
\end{theorem}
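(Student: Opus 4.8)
The plan is to obtain Theorem~\ref{teo2} directly from Theorem~\ref{teo1}, exploiting the fact that the two statements share the same Euler--Lagrange equations and differ only in the presentation of the first transversality condition. First I would observe that the fractional differential equations \eqref{ELeq_1} and \eqref{ELeq_2}, as well as the second and third transversality conditions, appear verbatim in both theorems. By Theorem~\ref{teo1} these all hold for any minimizer $(x,T)$, so they require no further argument and may simply be carried over.

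The only genuine work is to transform the first transversality condition. I would start from the second condition of \eqref{CT1},
$$
\left[\gamma_1 \, {_tI_T^{1-\a}} \partial_3L[x]_\gamma^{\alpha, \beta}(t)
-\gamma_2 \, {_TI_t^{1-\b}} \partial_3 L[x]_\gamma^{\alpha, \beta}(t)\right]_{t=T}
+\partial_2 \phi(T,x(T))=0,
$$
and solve it for the terminal sensitivity of the cost function, obtaining
$$
\partial_2 \phi(T,x(T))
=\left[\gamma_2 \, {_TI_t^{1-\b}} \partial_3 L[x]_\gamma^{\alpha, \beta}(t)
-\gamma_1 \, {_tI_T^{1-\a}} \partial_3L[x]_\gamma^{\alpha, \beta}(t)\right]_{t=T}.
$$
I would then substitute this expression for $\partial_2 \phi(T,x(T))$ into the term $\partial_2\phi(T,x(T))\,x'(T)$ of the first condition of \eqref{CT1}, namely $L[x]_\gamma^{\alpha, \beta}(T)+\partial_1\phi(T,x(T))+\partial_2\phi(T,x(T))x'(T)=0$. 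The replacement yields exactly the first transversality condition asserted in Theorem~\ref{teo2}, which completes the derivation.

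I do not expect any real obstacle here: the result is a purely algebraic reformulation of Theorem~\ref{teo1}, and the single point requiring care is to keep the signs of the two fractional-integral terms straight when transposing the bracket across the equality. The payoff of the rewriting is conceptual rather than technical, since it expresses the natural boundary condition at the free terminal time $T$ entirely in terms of $L$, $\phi$, $x'(T)$ and the fractional integrals of $\partial_3 L$, eliminating the explicit appearance of $\partial_2\phi(T,x(T))$.
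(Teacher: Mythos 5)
Your derivation is correct and is exactly the route the paper intends: the text preceding Theorem~\ref{teo2} states that it is obtained by rewriting the transversality conditions \eqref{CT1}, and your substitution of $\partial_2\phi(T,x(T))$ from the second condition into the first reproduces the stated system with the right signs. Nothing further is needed.
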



\section{Herglotz's variational principle}
\label{sec:theorems}

In this section we present a fractional variational principle 
of Herglotz depending on Caputo fractional derivatives. 

Let $\alpha,\beta: [a,b]^2\rightarrow (0,1)$ be two functions. 
The fractional Herglotz variational problem that we study 
consists in the determination of trajectories 
$x \in C^{1}\left([a,b]\right)$, satisfying a given initial 
condition $x(a)=x_{a} \in \mathbb{R}$, and a real $T \in [a,b]$ 
that extremize the value of $z(T)$, where $z$ satisfies 
the following differential equation with dependence 
on a combined Caputo fractional derivative operator:
\begin{equation}
\label{funct1H}
\dot{z}(t)=L\left(t,x(t), \DC x(t), z(t) \right),  
\quad  t \in [a,b],
\end{equation}
subject to the initial condition
\begin{equation}
\label{ICond}
z(a)=z_{a},
\end{equation}
where $z_a$ is a given real number.
In the sequel, we use the auxiliary notation
$$
[x,z]_\gamma^{\alpha, \beta}(t)=\left(t,x(t), \DC x(t), z(t) \right).
$$
The Lagrangian $L$ is assumed to satisfy the following hypothesis:
\begin{enumerate}
\item $L \in C^{1}\left([a,b] \times \mathbb{R}^{3},\mathbb{R}\right)$,

\item $t\rightarrow \lambda(t)\partial_3L[x,z]_\gamma^{\alpha, \beta}(t)$ 
is such that $_TD_t^{\b} \left( \lambda(t) 
\partial_3L[x,z]_\gamma^{\alpha, \beta}(t)\right)$,
\begin{equation*}
\LDb  \left( \lambda(t) \partial_{3} L[x,z]_\gamma^{\alpha, \beta}(t) \right)\ 
\text{ and } \ 
D_{\overline{\gamma}}^{\b,\a} 
\left( \lambda(t) \partial_{3} L[x,z]_\gamma^{\alpha, \beta}(t) \right)
\end{equation*}  
exist and are continuous on $[a,b]$, where
$$
\lambda(t)=\exp \left(-\int_a^t \partial_4 
L\left[x,z \right]_\gamma^{\alpha, \beta}(\tau)d\tau \right).
$$
\end{enumerate}

The following result gives necessary conditions 
of Euler--Lagrange type for an admissible 
function $x$ to be solution of the problem. 

\begin{theorem}
\label{mainteo}
Let $x \in C^{1}\left([a,b]\right)$ be such that $z$ defined 
by \eqref{funct1H} subject to the initial condition \eqref{ICond} 
has an extremum. Then, $(x,z)$ satisfies the fractional differential equations
\begin{equation}
\label{CEL1_Herg}
\partial_2 L[x,z]_\gamma^{\alpha, \beta}(t)\lambda(t)
+D{_{\overline{\gamma}}^{\b,\a}}\left(\lambda(t)
\partial_3 L[x,z]_\gamma^{\alpha, \beta}(t)\right)=0
\end{equation}
on $[a,T]$ and
\begin{equation}
\label{CEL2_Herg}
\gamma_2\left({\LDb} \left(\lambda(t)
\partial_3 L[x,z]_\gamma^{\alpha, \beta}(t)\right)
-{ _TD{_t^{\b}}\left(\lambda (t)\partial_3 
L[x,z]_\gamma^{\alpha, \beta}(t)\right)}\right)=0
\end{equation}
on $[T,b]$.
Moreover, the following transversality conditions are satisfied:
\begin{equation}
\label{CTransH}
\left\{
\begin{array}{l}
\left[\gamma_1 {_tI_T^{1-\a}} \left(\lambda (t) 
\partial_3L[x,z]_\gamma^{\alpha, \beta}(t)\right)
-\gamma_2 {_TI_t^{1-\b}} \left(\lambda (t) 
\partial_3L[x,z]_\gamma^{\alpha, \beta}(t)\right)\right]_{t=T}=0,\\
\gamma_2 \left[ {_TI_t^{1-\b}} \left( \lambda (t) 
\partial_3L[x,z]_\gamma^{\alpha, \beta}(t)\right) -{_aI_t^{1-\b} \left( \lambda (t) 
\partial_3L[x,z]_\gamma^{\alpha, \beta}(t)\right)}\right]_{t=b}=0.
\end{array}\right.
\end{equation}
If $T<b$, then $L[x,z]_\gamma^{\alpha, \beta}(T)=0$.
\end{theorem}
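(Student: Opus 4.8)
The plan is to use the classical first-variation technique adapted to the Herglotz setting, where the functional is defined implicitly through \eqref{funct1H} rather than by an explicit integral. First I would fix an admissible variation $x+\e h$ of the trajectory, with $h\in C^1([a,b])$ and $h(a)=0$ so that the constraint $x(a)=x_a$ is preserved, together with an independent variation $T+\e\,\Delta T$ of the terminal time. Writing $z_\e$ for the solution of \eqref{funct1H}--\eqref{ICond} associated with $x+\e h$, and letting $\theta$ denote the derivative of $z_\e$ with respect to $\e$ at $\e=0$, the requirement that $z(T)$ be extremal reads
$$
\theta(T)+L[x,z]_\gamma^{\alpha, \beta}(T)\,\Delta T=0
$$
for every admissible pair $(h,\Delta T)$, since $\dot z(T)=L[x,z]_\gamma^{\alpha, \beta}(T)$ by \eqref{funct1H}.

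Next I would obtain a usable expression for $\theta(T)$. Differentiating \eqref{funct1H} with respect to $\e$ at $\e=0$ gives the linear first-order equation
$$
\dot\theta(t)-\partial_4 L[x,z]_\gamma^{\alpha, \beta}(t)\,\theta(t)
=\partial_2 L[x,z]_\gamma^{\alpha, \beta}(t)\,h(t)
+\partial_3 L[x,z]_\gamma^{\alpha, \beta}(t)\,\DC h(t),
$$
with $\theta(a)=0$. Because $\lambda$ is precisely the integrating factor of this equation (indeed $\lambda'=-\partial_4 L[x,z]_\gamma^{\alpha, \beta}\,\lambda$ and $\lambda(a)=1$), multiplying by $\lambda$ and integrating on $[a,T]$ yields
$$
\lambda(T)\,\theta(T)=\int_a^T\lambda(t)\left(\partial_2 L[x,z]_\gamma^{\alpha, \beta}(t)\,h(t)
+\partial_3 L[x,z]_\gamma^{\alpha, \beta}(t)\,\DC h(t)\right)dt.
$$
This step is what explains the appearance of the weight $\lambda$ throughout \eqref{CEL1_Herg}--\eqref{CTransH}.

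The core of the argument is then to transfer the fractional derivatives off $h$ via Theorem \ref{thm:FIP}. Recalling $\DC h=\gamma_1\,\LC h+\gamma_2\,\RCb h$, the left-Caputo part $\gamma_1\int_a^T\lambda\,\partial_3L\cdot\LC h\,dt$ is treated directly on $[a,T]$, producing $\gamma_1\int_a^T h\cdot{_tD_T^{\a}}(\lambda\,\partial_3L)\,dt$ plus a boundary term at $t=T$ (the term at $t=a$ drops out since $h(a)=0$). The delicate part is the right-Caputo piece $\gamma_2\int_a^T\lambda\,\partial_3L\cdot\RCb h\,dt$: because $\RCb$ is nonlocal and reaches into $[T,b]$, I would write $\int_a^T=\int_a^b-\int_T^b$ and apply the second formula of Theorem \ref{thm:FIP} on $[a,b]$ and on $[T,b]$ separately. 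Recombining the two resulting volume integrals produces exactly the dual operator $D_{\overline{\gamma}}^{\b,\a}$ acting on $[a,T]$ and the difference $\LDb-{_TD_t^{\b}}$ acting on $[T,b]$, while the boundary contributions at $t=a,T,b$ collect into the bracketed quantities appearing in \eqref{CTransH}. I expect this splitting to be the main obstacle, since correctly bookkeeping the several boundary terms and matching them with the two transversality conditions is where sign and endpoint errors are easiest to make; hypothesis~(2) on the continuity of $_TD_t^{\b}$, $\LDb$ and $D_{\overline{\gamma}}^{\b,\a}$ applied to $\lambda\,\partial_3L$ is exactly what legitimizes these manipulations.

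Finally I would invoke the fundamental lemma of the calculus of variations. Restricting first to variations with $h(T)=h(b)=0$ and $\Delta T=0$ annihilates all boundary terms and forces $\lambda(T)\theta(T)$ to equal the sum of the two volume integrals; choosing $h$ supported in $(a,T)$ and then in $(T,b)$ makes each integrand vanish, giving the Euler--Lagrange equations \eqref{CEL1_Herg} on $[a,T]$ and \eqref{CEL2_Herg} on $[T,b]$. With these satisfied, the identity for $\lambda(T)\theta(T)$ collapses to its boundary part, and the extremum condition becomes a linear relation in the now-free quantities $h(T)$, $h(b)$ and $\Delta T$. Equating their coefficients to zero, and using $\lambda(T)>0$, delivers the two transversality conditions in \eqref{CTransH}; and when $T<b$, so that $\Delta T$ is unrestricted in sign, the coefficient of $\Delta T$ forces $L[x,z]_\gamma^{\alpha, \beta}(T)=0$.
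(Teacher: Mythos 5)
Your proposal is correct and follows essentially the same route as the paper: the integrating factor $\lambda$ for the linearized equation in $\theta$, the splitting $\int_a^T=\int_a^b-\int_T^b$ for the right-Caputo piece, integration by parts via Theorem~\ref{thm:FIP}, and the fundamental lemma with suitably localized $h$. The only (welcome) difference is that you introduce an explicit terminal-time variation $\Delta T$ and read off $L[x,z]_\gamma^{\alpha,\beta}(T)=0$ from its coefficient, whereas the paper simply imposes $\theta(T)=0$ and leaves that last condition implicit.
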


\begin{proof}
Let $x$ be a solution to the problem and consider an admissible variation 
of $x$, $\overline {x}= x+\e{h}$, where $h\in C^1([a,b])$ is an arbitrary 
perturbation curve and $\e \in\mathbb{R}$ represents a small number 
$\left(\e\rightarrow 0\right)$. The constraint $x(a)=x_a$ implies 
that all admissible variations must fulfill the condition $h(a)=0$.
On the other hand, consider an admissible variation of $z$, 
$\overline {z}= z+\e\theta$, where $\theta$ is a perturbation 
curve (not arbitrary) such that
\begin{enumerate}
\item $\theta(a)=0$, so that $z(a)=z_{a}$,

\item $\theta (T)=0$, because $z(T)$ is a maximum 
($\overline{z}(T)-z(T) \leq 0$) or a minimum 
($\overline{z}(T)-z(T)\geq0$),

\item $\theta (t)= \dfrac {d}{d \varepsilon} z(\overline {x},t) 
\biggr\rvert_{\varepsilon=0}$, so that the variation satisfies 
equation \eqref{funct1H}.
\end{enumerate}
Differentiating $\theta$ with respect to $t$, we obtain that
\begin{equation*}
\begin{split}
\dfrac{d}{dt}\theta (t)
& =\dfrac{d}{dt} \dfrac{d}{d\varepsilon} 
z(\overline {x},t) \biggr\rvert_{\varepsilon=0}\\
&= \dfrac{d}{d\varepsilon}\dfrac{d}{dt} 
z(\overline {x},t) \biggr\rvert_{\varepsilon=0}\\
&= \dfrac{d}{d\varepsilon} L\left(t,x(t)+\e{h(t)}, 
\DC x(t)+\e\DC {h(t)}, z(t) \right) \biggr\rvert_{\varepsilon=0}
\end{split}
\end{equation*}
and rewriting this relation, we obtain 
the following differential equation for $\theta$:
\begin{equation*}
\dot{\theta}(t) - \partial_{4}
L[x,z]_\gamma^{\alpha, \beta}(t) \theta(t) 
=\partial_{2} L[x,z]_\gamma^{\alpha, \beta}(t) h(t) 
+ \partial_{3}L[x,z]_\gamma^{\alpha, \beta}(t) \DC h(t).
\end{equation*}
Considering $\lambda(t)=\exp \left(- \displaystyle 
\int_a^{t} \partial_{4}L[x,z]_\gamma^{\alpha, \beta}(\tau)d\tau \right)$, 
we obtain the solution for the last differential equation:
\begin{equation*}
\theta(T)\lambda(T) - \theta(a) 
= \int_a^{T} \left(\partial_{2} L[x,z]_\gamma^{\alpha, \beta}(t) h(t) 
+ \partial_{3}L[x,z]_\gamma^{\alpha, \beta}(t) \DC h(t) \right) \lambda(t) dt.
\end{equation*}
By hypothesis, $\theta(a)=0$. If $x$ is such that $z(x,t)$ defined 
by \eqref{funct1H} attains an extremum at $t=T$, 
then $\theta(T)$ is identically zero. Hence, we get
\begin{equation}
\label{solutionPH}
\int_a^{T} \left(\partial_{2} L[x,z]_\gamma^{\alpha, \beta}(t)h(t) 
+ \partial_{3}L[x,z]_\gamma^{\alpha, \beta}(t) \DC h(t) \right) \lambda(t) dt = 0.
\end{equation}
Considering only the second term in \eqref{solutionPH}, 
and the definition of combined Caputo derivative, 
we obtain that
\begin{equation*}
\begin{split}
&\int_a^{T} \lambda(t) \partial_{3} L[x,z]_\gamma^{\alpha, \beta}(t) \left( \gamma_{1} 
\LC h(t) + \gamma_{2} \RCb h(t) \right)  dt\\
&=\gamma_{1} \int_a^{T} \lambda(t) \partial_{3} L[x,z]_\gamma^{\alpha, \beta}(t) \LC h(t)dt\\
&\ +\gamma_{2}  \left[ \int_a^{b} \lambda(t) \partial_{3} 
L[x,z]_\gamma^{\alpha, \beta}(t) \RCb h(t)dt - \int_T^{b} \lambda(t) 
\partial_{3} L[x,z]_\gamma^{\alpha, \beta}(t) \RCb h(t)dt \right]\\
&=\star.
\end{split}
\end{equation*}
Using Theorem~\ref{thm:FIP}, and considering 
$\overline {\gamma} =(\gamma_{2}, \gamma_{1})$, 
we get
\begin{equation*}
\begin{split}
\star
&= \int_a^{T} h(t) D_{\overline{\gamma}}^{\b,\a} 
\left( \lambda(t) \partial_{3} L[x,z]_\gamma^{\alpha, \beta}(t) \right) dt \\
&+ \int_T^{b} \gamma_{2} h(t) \left[ \LDb  \left( 
\lambda(t) \partial_{3} L[x,z]_\gamma^{\alpha, \beta}(t) \right)
- _TD_t^{\b} \left( \lambda(t) \partial_{3} L[x,z]_\gamma^{\alpha, \beta}(t) \right) \right] dt\\
& + h(T) \left[ \gamma_{1} {}_tI_{T}^{1-\a} \left( \lambda(t) 
\partial_{3} L[x,z]_\gamma^{\alpha, \beta}(t) \right) - \gamma_{2} {}_TI_{t}^{1-\b} 
\left( \lambda(t) \partial_{3} L[x,z]_\gamma^{\alpha, \beta}(t) \right) \right]_{t=T}\\
&+ h(b) \gamma_{2} \left[ _TI_{t}^{1-\b} \left( \lambda(t) 
\partial_{3} L[x,z]_\gamma^{\alpha, \beta}(t) \right) - _aI_{t}^{1-\b} \left( \lambda(t) 
\partial_{3} L[x,z]_\gamma^{\alpha, \beta}(t) \right) \right]_{t=b}.
\end{split}
\end{equation*}
Substituting this relation into expression \eqref{solutionPH}, we obtain
\begin{equation*}
\begin{split}
& \int_a^{T} h(t) \left[ \partial_{2} L[x,z]_\gamma^{\alpha, \beta}(t) \lambda(t) 
+ D_{\overline{\gamma}}^{\b,\a} \left( \lambda(t) 
\partial_{3} L[x,z]_\gamma^{\alpha, \beta}(t) \right)\right] dt \\
&+ \int_T^{b} \gamma_{2} h(t) \left[ \LDb  \left( \lambda(t) 
\partial_{3} L[x,z]_\gamma^{\alpha, \beta}(t) \right)  - _TD_t^{\b} \left( \lambda(t) 
\partial_{3} L[x,z]_\gamma^{\alpha, \beta}(t) \right) \right] dt\\
&+ h(T) \left[ \gamma_{1} {}_tI_{T}^{1-\a} \left( \lambda(t) 
\partial_{3} L[x,z]_\gamma^{\alpha, \beta}(t) \right) - \gamma_{2} {}_TI_{t}^{1-\b} 
\left( \lambda(t) \partial_{3} L[x,z]_\gamma^{\alpha, \beta}(t) \right) \right]_{t=T}\\
&+ h(b) \gamma_{2} \left[ _TI_{t}^{1-\b} \left( \lambda(t) 
\partial_{3} L[x,z]_\gamma^{\alpha, \beta}(t) \right) - _aI_{t}^{1-\b} \left( \lambda(t) 
\partial_{3} L[x,z]_\gamma^{\alpha, \beta}(t) \right) \right]_{t=b} =0.
\end{split}
\end{equation*}
With appropriate choices for the variations $h(\cdot)$, 
we get the Euler--Lagrange equations \eqref{CEL1_Herg}--\eqref{CEL2_Herg} 
and the transversality conditions \eqref{CTransH}.
\end{proof}

\begin{remark}
If $\a$ and $\b$ tend to 1, and if the Lagrangian $L$ 
is of class $C^2$, then the first Euler--Lagrange 
equation \eqref{CEL1_Herg} becomes
$$
\partial_2 L[x,z]_\gamma^{\alpha, \beta}(t)\lambda(t)+(\gamma_2- \gamma_1)
\frac{d}{dt}\left[\lambda(t)\partial_{3} L[x,z]_\gamma^{\alpha, \beta}(t)\right]=0.
$$
Differentiating and considering the derivative 
of the lambda function, we get
\begin{multline*}
\lambda(t) \Biggl[\partial_2 L[x,z]_\gamma^{\alpha, \beta}(t)\\
+(\gamma_2 - \gamma_1)\left[-\partial_{4} L[x,z]_\gamma^{\alpha, \beta}(t)
\partial_{3} L[x,z]_\gamma^{\alpha, \beta}(t)
+\frac{d}{dt}\partial_{3} L[x,z]_\gamma^{\alpha, \beta}(t)\right]\Biggr]=0.
\end{multline*}
As $\lambda(t)>0$ for all $t$, we deduce that
$$
\partial_2 L[x,z]_\gamma^{\alpha, \beta}(t)
+(\gamma_2- \gamma_1)\left[
\frac{d}{dt}\partial_{3} L[x,z]_\gamma^{\alpha, \beta}(t)
-\partial_{4} L[x,z]_\gamma^{\alpha, \beta}(t)
\partial_{3} L[x,z]_\gamma^{\alpha, \beta}(t)\right]=0.
$$
\end{remark}


\section{The case of several independent variables}

We now obtain a generalization of Herglotz's principle 
of Section~\ref{sec:theorems} for problems involving 
$n+1$ independent variables. Define 
$\Omega=\prod_{i=1}^{n} [a_i,b_i]$, with $n \in \mathbb{N}$, 
$P=[a,b]\times \Omega$ and consider the vector 
$s=(s_1, s_2, \ldots, s_n)\in \Omega$. The new problem consists 
in determining the trajectories $x \in C^{1}\left(P\right)$ 
that give an extremum to $z[x,T]$, where the functional 
$z$ satisfies the differential equation
\begin{multline}
\label{funct_siv1}
\dot{z}(t)=\int_{\Omega}L\left(t,s, x(t,s), \DC x(t,s),\right.\\
\left.^CD_{\gamma^1}^{\alpha_1(\cdot,\cdot),\beta_1(\cdot,\cdot)}x(t,s),\ldots, ^CD_{\gamma^n}^{\alpha_n(\cdot,\cdot),\beta_n(\cdot,\cdot)}x(t,s), z(t) \right)d^{n}s
\end{multline}
subject to the constraint
\begin{equation}
\label{herg:bound} 
x(t,s)=g(t,s)\quad \mbox{ for all }(t,s) \in \partial P,
\end{equation}
where $\partial P$ is the boundary of $P$ and $g$ is a given function 
$g:\partial P \rightarrow \mathbb{R}$. Assume that
\begin{enumerate}
\item $\alpha, \alpha_i, \beta, 
\beta_i: [a,b]^2 \rightarrow (0,1)$ with $i=1,\ldots n$,

\item $\gamma,\gamma^1, \ldots, \gamma^n \in [0,1]^2$,

\item $d^{n}s=ds_1\ldots ds_n$,

\item $\DC x(t,s)$, $^CD_{\gamma^1}^{\alpha_1(\cdot,\cdot),\beta_1(\cdot,\cdot)}x(t,s),
\ldots, ^CD_{\gamma^n}^{\alpha_n(\cdot,\cdot),\beta_n(\cdot,\cdot)}x(t,s)$ 
exist and are continuous functions,

\item the Lagrangian $L:P\times\mathbb{R}^{n+3} 
\rightarrow \mathbb{R}$ is of class $C^1$.
\end{enumerate}

\begin{remark} 
By $\DC x(t,s)$ we mean the Caputo fractional derivative 
with respect to the independent variable $t$ and by 
$^CD_{\gamma^i}^{\alpha_i(\cdot,\cdot),\beta_i(\cdot,\cdot)}x(t,s)$ 
we mean the Caputo fractional derivative with respect 
to the independent variable $s_i$, $i=1,\ldots,n$.
\end{remark}

In the sequel, we use the auxiliary notation
\begin{multline*}
[x,z]_{n, \gamma}^{\alpha, \beta}(t,s)
=\left(t,s,x(t,s), \DC x(t,s), 
^CD_{\gamma^1}^{\alpha_1(\cdot,\cdot),
\beta_1(\cdot,\cdot)}x(t,s),\right.\\
\left. \ldots, ^CD_{\gamma^n}^{\alpha_n(\cdot,\cdot),
\beta_n(\cdot,\cdot)}x(t,s), z(t) \right).
\end{multline*}
Consider the function
$$
\lambda(t)=\exp \left(-\int_a^t 
\int_{\Omega} \partial_{2n+4}
\left[x,z \right]_{n, \gamma}^{\alpha, \beta}(\tau,s) d^ns d\tau \right).
$$

\begin{theorem}
If $(x,z)$ is an extremizer to the functional \eqref{funct_siv1}, 
then $(x,z)$ satisfies the fractional differential equation
\begin{multline}
\label{CEL1_Herg2}
\partial_{n+2} L[x,z]_{n, \gamma}^{\alpha, \beta}(t,s)\lambda(t)
+D{_{\overline{\gamma}}^{\b,\a}}\left(\lambda(t)\partial_{n+3} 
L[x,z]_{n, \gamma}^{\alpha, \beta}(t,s)\right)\\
+ \sum_{i=1}^{n} D_{\overline{\gamma}^{i}}^{\beta_i(\cdot,\cdot),
\alpha_i(\cdot,\cdot)}\left(\lambda(t) \partial_{n+3+i}
L[x,z]_{n, \gamma}^{\alpha, \beta}(t,s)\right)=0
\end{multline}
on $[a,T] \times \Omega$ and
\begin{equation}
\label{CEL2_Herg2}
\gamma_2\left({\LDb} \left(\lambda(t)\partial_{n+3} 
L[x,z]_{n, \gamma}^{\alpha, \beta}(t,s)\right)
-{ _TD{_t^{\b}}\left(\lambda (t) \partial_{n+3} 
L[x,z]_{n, \gamma}^{\alpha, \beta}(t,s)\right)}\right)=0
\end{equation}
on $[T,b]\times \Omega$. Moreover, $(x,z)$ 
satisfies the transversality condition
\begin{multline}
\label{CTransH2}
\Bigl[\gamma_1 {_tI_T^{1-\a}} \left(\lambda (t) \partial_{n+3}
L[x,z]_{n, \gamma}^{\alpha, \beta}(t,s)\right) \\
-\gamma_2 {_TI_t^{1-\b}} \left(\lambda(t)\partial_{n+3}
L[x,z]_{n, \gamma}^{\alpha, \beta}(t,s)\right)\Bigr]_{t=T}=0,
\quad s \in\Omega. 
\end{multline}
If $T<b$, then $\displaystyle \int_{\Omega}
L[x,z]_{n, \gamma}^{\alpha, \beta}(T,s)d^{n}s=0$.
\end{theorem}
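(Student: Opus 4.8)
The plan is to adapt the single--variable argument of Theorem~\ref{mainteo} to a field variation, replacing the time--direction fractional integration by parts by $n+1$ such operations, one for $t$ and one for each spatial coordinate $s_i$. First I would fix an admissible variation $\overline{x}=x+\e h$ of $x$, where $h\in C^1(P)$ is an arbitrary perturbation compatible with the constraint \eqref{herg:bound}, i.e. $h(t,s)=0$ for every $(t,s)\in\partial P$. As in the one--variable case I would introduce
$$
\theta(t)=\frac{d}{d\e}\,z(\overline{x},t)\Big\rvert_{\e=0},
$$
which satisfies $\theta(a)=0$ by \eqref{ICond} and $\theta(T)=0$ because $z(T)$ is an extremum.

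Differentiating the defining equation \eqref{funct_siv1} with respect to $\e$ at $\e=0$ and interchanging the $\e$--derivative with the integral over $\Omega$, the chain rule gives the first--order linear equation
\begin{multline*}
\dot{\theta}(t)-\left(\int_\Omega\partial_{2n+4}L[x,z]_{n,\gamma}^{\alpha,\beta}(t,s)\,d^ns\right)\theta(t)
=\int_\Omega\Bigl[\partial_{n+2}L[x,z]_{n,\gamma}^{\alpha,\beta}(t,s)\,h(t,s)\\
+\partial_{n+3}L[x,z]_{n,\gamma}^{\alpha,\beta}(t,s)\,\DC h(t,s)
+\sum_{i=1}^{n}\partial_{n+3+i}L[x,z]_{n,\gamma}^{\alpha,\beta}(t,s)\,
{}^CD_{\gamma^i}^{\alpha_i(\cdot,\cdot),\beta_i(\cdot,\cdot)}h(t,s)\Bigr]d^ns,
\end{multline*}
where $\theta(t)$ could be pulled out of the $\Omega$--integral because it depends on $t$ only. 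The function $\lambda(t)$ introduced before the statement is exactly the integrating factor of this equation; multiplying by $\lambda$, integrating over $[a,T]$ and using $\lambda(a)=1$ and $\theta(a)=\theta(T)=0$, I would obtain the vanishing of $\int_a^T\int_\Omega\lambda\bigl(\partial_{n+2}L\,h+\partial_{n+3}L\,\DC h+\sum_i\partial_{n+3+i}L\,{}^CD_{\gamma^i}^{\alpha_i,\beta_i}h\bigr)\,d^ns\,dt$, the exact analogue of \eqref{solutionPH}.

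The core step is to move every fractional derivative off $h$. For the term in $\DC h$ I would repeat the manipulation of Theorem~\ref{mainteo} in the variable $t$: split the combined Caputo derivative into left and right parts, write the right part over $[a,b]$ minus $[T,b]$, and apply Theorem~\ref{thm:FIP} with second factor $\lambda(t)\partial_{n+3}L$. After integrating over $\Omega$ this yields the dual operator $D_{\overline{\gamma}}^{\b,\a}(\lambda\,\partial_{n+3}L)$ on $[a,T]\times\Omega$, the bracket of \eqref{CEL2_Herg2} on $[T,b]\times\Omega$, together with boundary terms at $t=a,\,T,\,b$. The contributions at $t=a$ and $t=b$ drop out because $h=0$ there (both faces lie in $\partial P$), so only the term at $t=T$ survives, which is precisely \eqref{CTransH2}. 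For each $i$, the term in ${}^CD_{\gamma^i}^{\alpha_i(\cdot,\cdot),\beta_i(\cdot,\cdot)}h$ is handled by applying Theorem~\ref{thm:FIP} in the variable $s_i$: here $\lambda(t)$ is constant in $s_i$ and passes through the formula, while the boundary terms at $s_i=a_i$ and $s_i=b_i$ vanish since $h=0$ on $\partial P$, leaving the dual operator $D_{\overline{\gamma}^i}^{\beta_i(\cdot,\cdot),\alpha_i(\cdot,\cdot)}(\lambda\,\partial_{n+3+i}L)$ in the sum of \eqref{CEL1_Herg2}.

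Collecting terms, the identity reduces to an integral over $[a,T]\times\Omega$ against $h$, an integral over $[T,b]\times\Omega$ against $\gamma_2 h$, plus the surviving boundary integral at $t=T$. Choosing $h$ vanishing at $t=T$ and supported successively in $[a,T]\times\Omega$ and in $[T,b]\times\Omega$, the fundamental lemma of the calculus of variations forces the two bracketed integrands to vanish, giving \eqref{CEL1_Herg2} and \eqref{CEL2_Herg2}; letting $h(T,\cdot)$ be arbitrary then yields \eqref{CTransH2} pointwise in $s\in\Omega$. Finally, since $\dot{z}(t)=\int_\Omega L\,d^ns$ by \eqref{funct_siv1} and $z(T)$ is stationary with respect to the free terminal time, we obtain $\int_\Omega L[x,z]_{n,\gamma}^{\alpha,\beta}(T,s)\,d^ns=0$ whenever $T<b$. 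The main obstacle I anticipate is the bookkeeping of the mixed integration by parts: one scalar equation for $\theta$ must absorb a field variation $h(t,s)$ coupled across $\Omega$, and the single weight $\lambda(t)$ must be treated as active in the $t$--direction but inert in each $s_i$--direction, so the $n+1$ applications of Theorem~\ref{thm:FIP} have to be organized carefully to land on the correct dual operators and boundary faces.
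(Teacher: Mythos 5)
Your proposal is correct and follows essentially the same route as the paper's own proof: the same variation $\overline{x}=x+\e h$ with $h$ vanishing on $\partial P$, the same scalar equation $\dot{\theta}-B\theta=A$ solved with the integrating factor $\lambda$, the same splitting of the right Caputo term over $[a,b]$ minus $[T,b]$ followed by Theorem~\ref{thm:FIP} in $t$ and in each $s_i$, and the same localization of $h$ at the end. Your added one-line justification of the terminal condition $\int_{\Omega}L[x,z]_{n,\gamma}^{\alpha,\beta}(T,s)\,d^{n}s=0$ for $T<b$ (stationarity of $z$ in the free terminal time) is consistent with, and slightly more explicit than, the paper.
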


\begin{proof}
Let $x$ be a solution to the problem. Consider an admissible variation of $x$, 
$\overline {x}(t,s)= x(t,s)+\e{h(t,s)}$, where $h\in C^1(P)$ is an arbitrary 
perturbing curve and $\e \in\mathbb{R}$ is such that $|\e|\ll 1$. 
Consequently, from the boundary condition \eqref{herg:bound},
$h(t,s)=0$ for all $(t,s)\in \partial P$. On the other hand, 
consider an admissible variation of $z$, $\overline {z}= z+\e\theta$, 
where $\theta$ is a perturbing curve such that $\theta (a)=0$ and
$$
\theta(t)= \dfrac{d}{d \varepsilon} 
z(\overline {x},t) \biggr\rvert_{\varepsilon=0}.
$$
Differentiating $\theta(t)$ with respect to $t$, we obtain that
\begin{equation*}
\begin{split}
\dfrac{d}{dt}\theta (t)
&=\dfrac{d}{dt} \dfrac{d}{d\varepsilon} z(\overline {x},t) 
\biggr\rvert_{\varepsilon=0}\\
&= \dfrac{d}{d\varepsilon}\dfrac{d}{dt} z\left(\overline {x},t\right) 
\biggr\rvert_{\varepsilon=0}\\
&= \dfrac{d}{d\varepsilon} \int_{\Omega}
L[\overline{x},z]_{n, \gamma}^{\alpha, \beta}(t,s) d^{n}s 
\biggr\rvert_{\varepsilon=0}.
\end{split}
\end{equation*}
We conclude that
\begin{multline*}
\dot{\theta}(t)
=\int_{\Omega} \left( \partial_{n+2}
L[x,z]_{n, \gamma}^{\alpha, \beta}(t) h(t,s) 
+\partial_{n+3} 
L[x,z]_{n, \gamma}^{\alpha, \beta}(t,s) \DC h(t,s)\right.\\
+\left. \sum_{i=1}^{n}\partial_{n+3+i}
L[x,z]_{n, \gamma}^{\alpha, \beta}(t,s) ^CD_{\gamma^{i}}^{\alpha_i(\cdot,\cdot),
\beta_i(\cdot,\cdot)}h(t,s)+\partial_{2n+4}
L[x,z]_{n, \gamma}^{\alpha, \beta}(t,s) \theta(t)\right) d^{n}s.
\end{multline*}
To simplify the notation, define
$$ 
B(t)=\int_{\Omega}\partial_{2n+4}
L[x,z]_{n, \gamma}^{\alpha, \beta}(t,s)d^{n}s 
$$
and
\begin{multline*}
A(t)
=\int_{\Omega}\Bigl( \partial_{n+2}
L[x,z]_{n, \gamma}^{\alpha, \beta}(t) h(t,s) 
+\partial_{n+3} L[x,z]_{n, \gamma}^{\alpha, \beta}(t,s) 
\DC h(t,s)\\
+\sum_{i=1}^{n}\partial_{n+3+i}
L[x,z]_{n, \gamma}^{\alpha, \beta}(t,s) 
^CD_{\gamma^{i}}^{\alpha_i(\cdot,\cdot),
\beta_i(\cdot,\cdot)}h(t,s) \Bigr) d^{n}s.
\end{multline*}
Then, we obtain the linear differential equation
$$
\dot{\theta}(t)-B(t)\theta(t)=A(t),
$$
whose solution is
\begin{equation*}
\theta(T)\lambda(T) - \theta(a) = \int_a^{T} A(t) \lambda(t) dt.
\end{equation*}
Since $\theta(a)=\theta(T)=0$, we get
\begin{equation}
\label{solutionPH2}
\int_a^{T} A(t)\lambda(t)dt = 0.
\end{equation}
Considering only the second term in \eqref{solutionPH2}, we can write
\begin{equation*}
\begin{split}
\int_a^{T} 
&\int_{\Omega}\lambda(t) \partial_{n+3} 
L[x,z]_{n, \gamma}^{\alpha, \beta}(t,s) 
\left( \gamma_{1} \LC h(t,s) + \gamma_{2} \RCb h(t,s) \right) d^{n}s dt\\
&=\gamma_{1} \int_a^{T} \int_{\Omega}\lambda(t) \partial_{n+3} 
L[x,z]_{n, \gamma}^{\alpha, \beta}(t,s) \LC h(t,s)d^{n}s dt\\
&\quad +\gamma_{2}  \left[ \int_a^{b} \int_{\Omega}\lambda(t) \partial_{n+3} 
L[x,z]_{n, \gamma}^{\alpha, \beta}(t,s) \RCb h(t,s) d^{n}s dt\right.\\ 
& \qquad\qquad - \left.\int_T^{b} \int_{\Omega}\lambda(t) 
\partial_{n+3} L[x,z]_{n, \gamma}^{\alpha, \beta}(t,s) \RCb h(t,s) d^{n}sdt \right].
\end{split}
\end{equation*}
Let $\overline {\gamma} =(\gamma_{_{2}}, \gamma_{1})$. 
Integrating by parts (cf. Theorem~\ref{thm:FIP}) 
and since $h(a,s)=0$ and $h(b,s)=0$ for all $s \in \Omega$, 
we obtain the following expression:
\begin{equation*}
\begin{split}
&\int_a^{T} \int_{\Omega} h(t,s) D_{ \overline{\gamma}}^{\b,\a} 
\left( \lambda(t) \partial_{n+3} 
L[x,z]_{n, \gamma}^{\alpha, \beta}(t,s) \right) d^{n}sdt \\
&+ \gamma_{2}\int_T^{b} \int_{\Omega} h(t,s) \left[ 
\LDb  \left( \lambda(t) \partial_{n+3} 
L[x,z]_{n, \gamma}^{\alpha, \beta}(t,s) \right)\right.\\  
&\qquad\qquad\qquad\qquad\qquad 
-\left. _TD_t^{\b} \left( \lambda(t) \partial_{n+3} 
L[x,z]_{n, \gamma}^{\alpha, \beta}(t,s) \right) \right] d^{n}sdt\\
& + \int_{\Omega} h(T,s) \left[ \gamma_{1} {}_tI_{T}^{1-\a} \left( \lambda(t) 
\partial_{n+3} L[x,z]_{n, \gamma}^{\alpha, \beta}(t,s) \right)\right.\\
&\qquad\qquad\qquad\qquad  \left. - \gamma_{2} \, _TI_{t}^{1-\b} 
\left( \lambda(t) \partial_{n+3} 
L[x,z]_{n, \gamma}^{\alpha, \beta}(t,s) \right) d^n s\right]_{t=T}.
\end{split}
\end{equation*}
Doing similarly for the $(i+2)$th term of \eqref{solutionPH2}, 
$i=1,\ldots, n$, letting $\overline {\gamma}^i =(\gamma_{_{2}}^i, 
\gamma_{1}^{i})$, and since $h(t,a_i)=h(t,b_i)=0$ for all $t \in [a,b]$, 
we obtain
\begin{multline*}
\int_a^{T} \int_{\Omega}\lambda(t) \partial_{n+3+i} 
L[x,z]_{n, \gamma}^{\alpha, \beta}(t,s) \left( \gamma_{1}^{i} 
{_{a_i}^CD_{s_i}^{\alpha_{i}(\cdot,\cdot)} }h(t,s) 
+ \gamma_{2}^{i} {_{s_i}^CD_{b_i}^{\beta_{i}(\cdot,\cdot)}} h(t,s)\right) d^{n}s dt\\
=\int_a^{T} \int_{\Omega} h(t,s) D_{\overline {\gamma}^i}^{~\beta_{i}(\cdot,\cdot),
\alpha_{i}(\cdot,\cdot)} \left(\lambda(t)\partial_{n+3+i} 
L[x,z]_{n, \gamma}^{\alpha, \beta}(t,s) \right) d^{n}s dt.
\end{multline*}
Substituting these relations into \eqref{solutionPH2}, we deduce that
\begin{equation*}
\begin{split}
\int_a^{T} &\int_{\Omega} h(t,s)\left[ \partial_{n+2}
L[x,z]_{n, \gamma}^{\alpha, \beta}(t,s)\lambda(t) 
+ D_{ \overline{\gamma}}^{\b,\a} 
\left( \lambda(t) \partial_{n+3} 
L[x,z]_{n, \gamma}^{\alpha, \beta}(t,s) \right) \right.\\
&+\sum_{i=1}^{n} D_{\overline {\gamma}^i}^{~\beta_{i}(\cdot,\cdot),
\alpha_{i}(\cdot,\cdot)} \left(\lambda(t)
\partial_{n+3+i} L[x,z]_{n, \gamma}^{\alpha, \beta}(t,s) \right) d^{n}sdt \\
&+ \gamma_{2}\int_T^{b} \int_{\Omega} h(t,s) \left[ 
\LDb\left( \lambda(t) \partial_{n+3} 
L[x,z]_{n, \gamma}^{\alpha, \beta}(t,s) \right)\right.\\
& \qquad\qquad\qquad\qquad\quad \left.
- _TD_t^{\b} \left( \lambda(t) \partial_{n+3} 
L[x,z]_{n, \gamma}^{\alpha, \beta}(t,s) \right) \right] d^{n}sdt\\
&+ \int_{\Omega} h(T,s) \left[ \gamma_{1} {}_tI_{T}^{1-\a} 
\left( \lambda(t) \partial_{n+3} 
L[x,z]_{n, \gamma}^{\alpha, \beta}(t,s) \right)\right.\\
&\qquad\qquad\qquad\qquad\left. 
-\gamma_{2} \, _TI_{t}^{1-\b} \left( \lambda(t) 
\partial_{n+3} L[x,z]_{n, \gamma}^{\alpha, \beta}(t,s) 
\right)d^ns \right]_{t=T}.
\end{split}
\end{equation*}
We get the Euler--Lagrange equations 
\eqref{CEL1_Herg2}--\eqref{CEL2_Herg2} 
and the transversality condition \eqref{CTransH2}
with appropriate choices of $h$.
\end{proof}


\section{Illustrative examples}

We present three examples.

\begin{example}
\label{ex:2}
Consider
\begin{equation}
\label{exemp2}
\begin{gathered}
\dot{z}(t)=\left(\DC x(t)\right)^2+z(t)+t^2-1, \quad t\in [0,3],\\
x(0)=1, \quad z(0)=0.
\end{gathered}
\end{equation}
In this case, $\lambda(t)=\exp(-t)$.  
The necessary optimality conditions \eqref{CEL1_Herg}--\eqref{CEL2_Herg} 
of Theorem~\ref{mainteo} hold for $\overline{x}(t) \equiv 1$. 
If we replace $x$ by $\overline{x}$ in \eqref{exemp2}, we obtain
\begin{gather*}
\dot{z}(t)-z(t)=t^2-1, \quad t\in [0,3],\\
z(0)=0,
\end{gather*}
whose solution is
\begin{equation}
\label{z:sol}
z(t)=\exp(t)-(t+1)^2.
\end{equation}
The last transversality condition of Theorem~\ref{mainteo} asserts that
$$
L[\overline x,z]_\gamma^{\alpha, \beta}(T)=0 \Leftrightarrow \exp(T)-2T-2=0,
$$
whose solution is approximately
$$
T\approx 1.67835.
$$
We remark that $z$ \eqref{z:sol} actually attains a minimum value 
at this point (see Figure~\ref{heg1},~(a)):
$$
z(1.67835)\approx -1.81685.
$$
\end{example}

\begin{example}
\label{ex:3}
Consider now
\begin{equation}
\label{exemp3}
\begin{gathered}
\dot{z}(t)=(t-1)\left(x^2(t)+z^2(t)+1\right), \quad t\in [0,3],\\
x(0)=0, \quad z(0)=0.
\end{gathered}
\end{equation}
Since the first Euler--Lagrange equation \eqref{CEL1_Herg} reads
$$
(t-1)x(t)=0 \quad \forall \, t\in[0,T],
$$
we see that $\overline x(t) \equiv 0$ is a solution of this equation. 
The second transversality condition of \eqref{CTransH} 
asserts that, at $t=T$, we must have
$$
L[\overline x,z]_\gamma^{\alpha, \beta}(t)=0,
$$
that is,
$$
(t-1)(z^2(t)+1)=0,
$$
and so $T=1$ is a solution of this equation.
Substituting $x$ by $\overline{x}$ in \eqref{exemp3}, we get
\begin{gather*}
\dot{z}(t)=(t-1)(z^2(t)+1), \quad t\in [0,3],\\
z(0)=0.
\end{gather*}
The solution to this Cauchy problem 
is the function 
$$
z(t)=\tan\left(\frac{t^2}{2}-t\right)
$$
(see Figure~\ref{heg1},~(b)) and the minimum value is
$$
z(1)=\tan\left(-\frac{1}{2}\right).
$$
\end{example}

\begin{example}
\label{ex:1}
For our last example, consider
\begin{equation}
\label{exemp}
\begin{gathered}
\dot{z}(t)=\left(\DC x(t) - f(t)\right)^2+t^2-1, \quad t\in [0,3],\\
x(0)=0, \quad z(0)=0,
\end{gathered}
\end{equation}
where
$$
f(t) := \frac{t^{1-\alpha(t)}}{2\Gamma(2-\alpha(t))}
-\frac{(3-t)^{1-\beta(t)}}{2\Gamma(2-\beta(t))}.
$$
In this case, $\lambda(t)\equiv 1$. We intend to find a pair $(x,z)$, 
satisfying all the conditions in \eqref{exemp}, 
for which $z(T)$ attains a minimum value.
It is easy to verify that $\overline{x}(t) = t$ and $T=1$ satisfy 
the necessary conditions given by Theorem~\ref{mainteo}. 
Replacing $x$ by $\overline{x}$ in system \eqref{exemp}, 
we get a Cauchy problem of form
\begin{gather*}
\dot{z}(t)=t^2-1, \quad t\in [0,3],\\
z(0)=0,
\end{gather*}
whose solution is
$$
z(t)=\frac{t^3}{3}-t.
$$
Observe that this function attains a minimum value 
at $T=1$, which is $z(1)=-2/3$ (Figure~\ref{heg1},~(c)).
\end{example}

\begin{figure}[ht!]
\begin{center}
\subfigure[Extremal $z$ of Example~\ref{ex:2}.]{\includegraphics[scale=0.23]{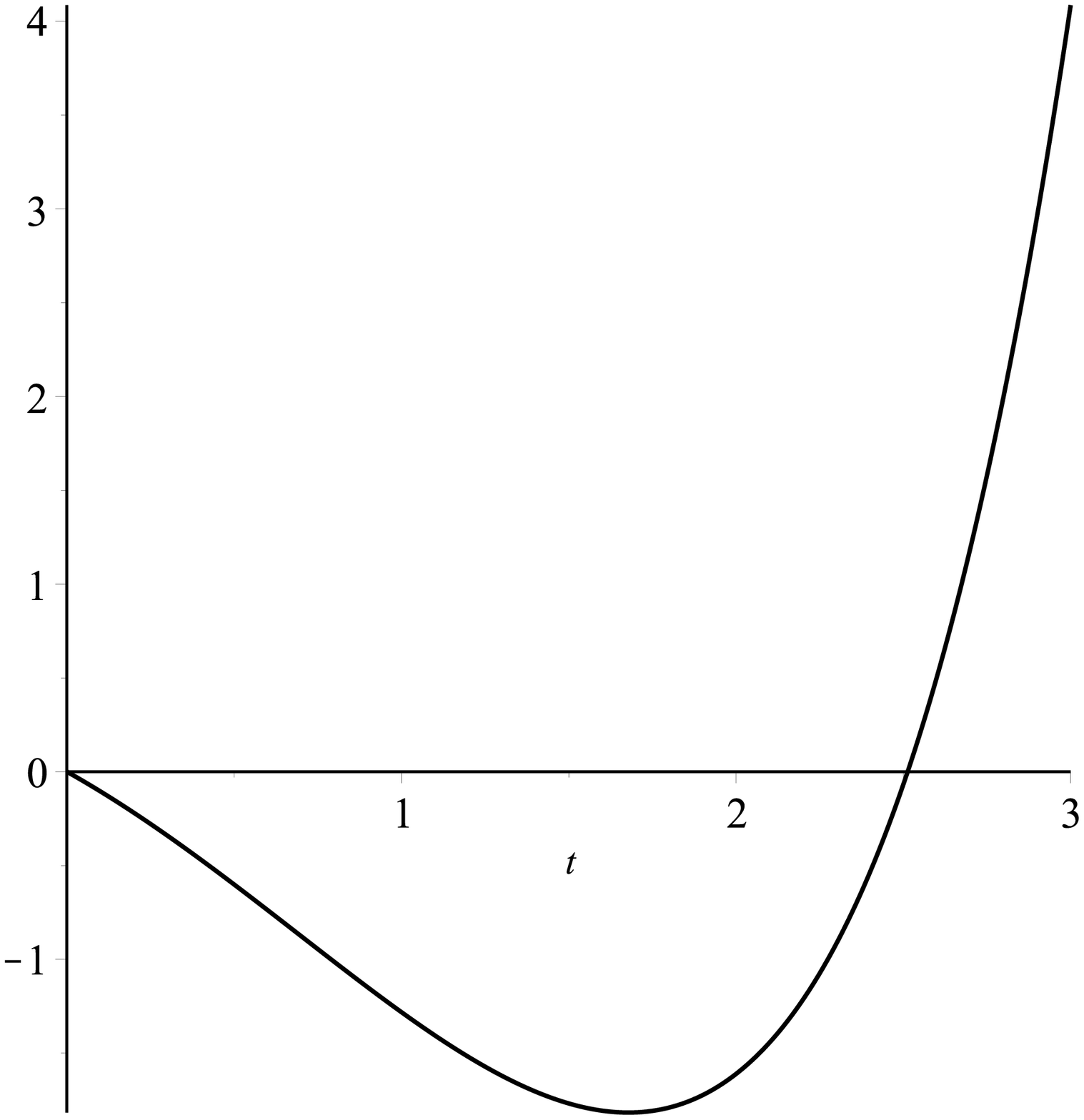}}
\subfigure[Extremal $z$ of Example~\ref{ex:3}.]{\includegraphics[scale=0.23]{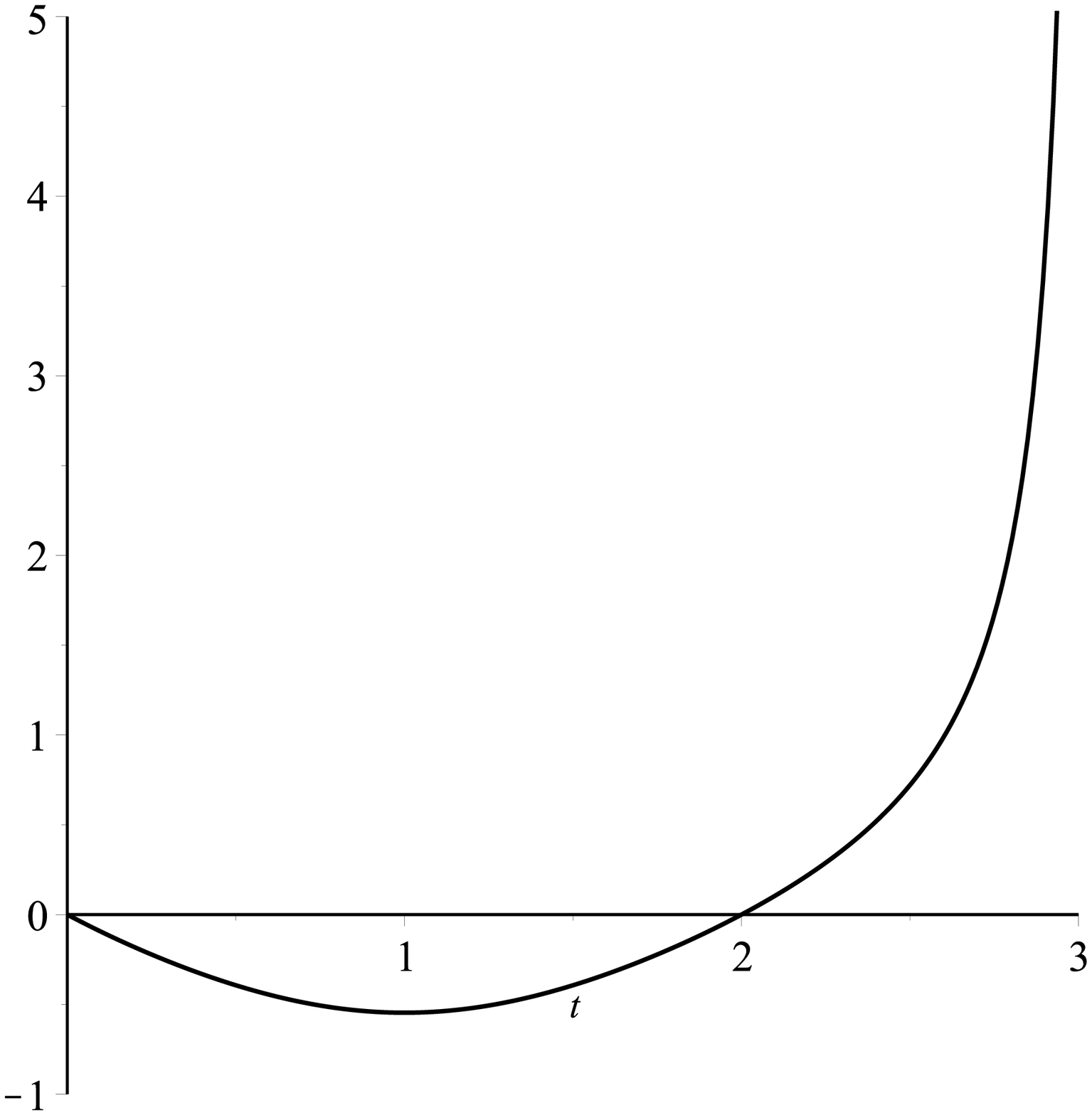}}
\subfigure[Extremal $z$ of Example~\ref{ex:1}.]{\includegraphics[scale=0.23]{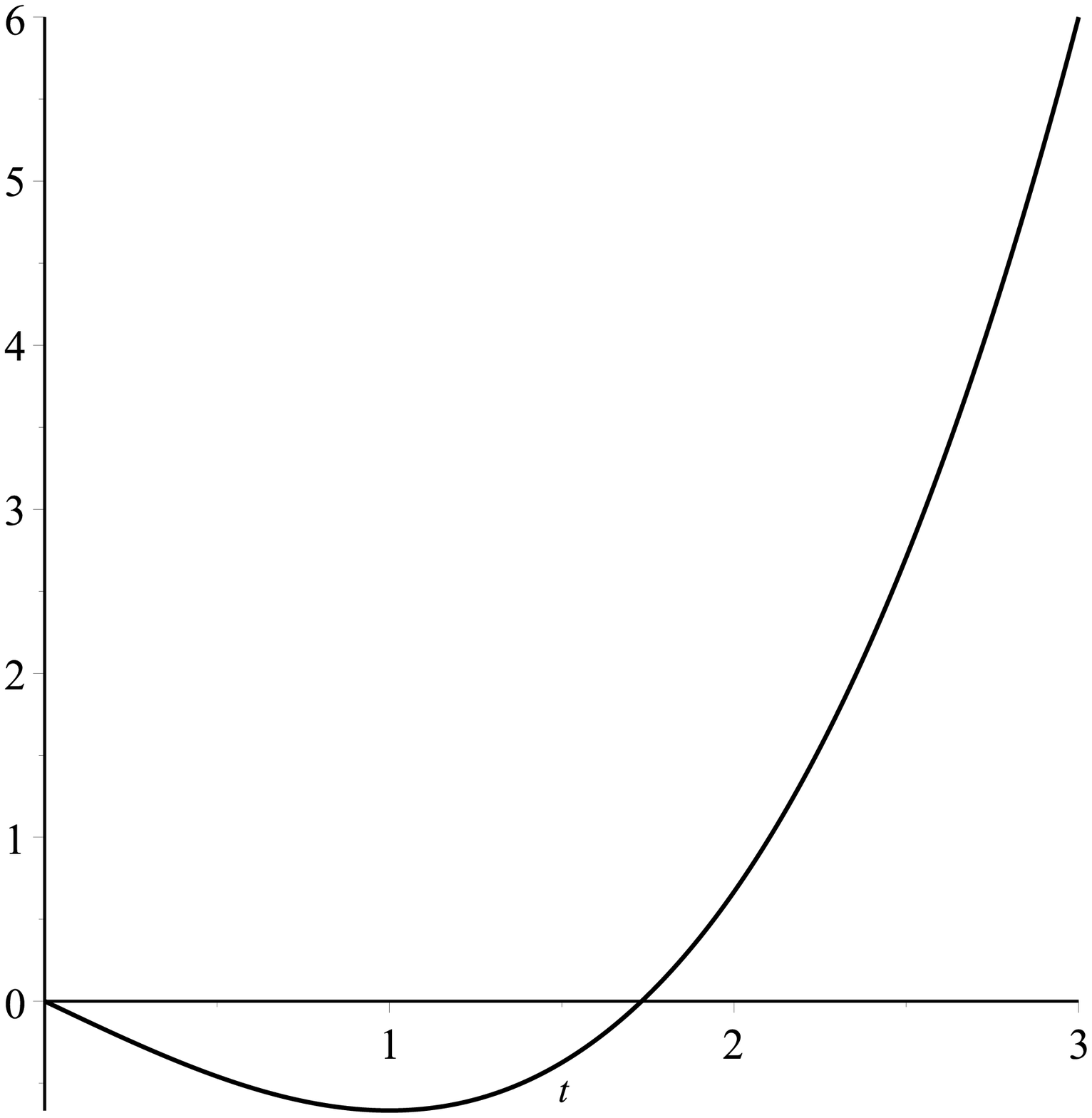}}
\caption{Graphics of function $z(\overline x,t)$.}\label{heg1}
\end{center}
\end{figure}


\section*{Acknowledgements}

The authors are grateful to two anonymous referees 
for their comments.



\medskip

Received July 29, 2016; Revised Feb 03, 2017; Accepted March 27, 2017.

\medskip


\end{document}